\documentclass[reqno,11pt]{amsart}
\usepackage{amsfonts}
\usepackage{times}
\usepackage{amssymb, amsmath}
\allowdisplaybreaks

\date{January 18, 2012}
\theoremstyle{plain}
\newtheorem{theorem}{Theorem}[section]

\newtheorem{lemma}[theorem]{Lemma}

\theoremstyle{definition}
\newtheorem{defn}{Definition}[section]
\theoremstyle{remark}
\newtheorem{rem}{Remark}[section]
\numberwithin{equation}{section}

\newcommand{\real}{\mathbb{R}}
\newcommand{\ds}{\displaystyle}
\newcommand{\vare}{\varepsilon}
\newcommand{\loc}{\scriptsize{loc}}



\begin{document}

\title[Stability of 2D flows under three-dimensional perturbations]
{Stability of two-dimensional viscous incompressible flows under three-dimensional perturbations and inviscid symmetry breaking}

\author[C. Bardos]{Claude Bardos}
\address[C. Bardos] {31 Avenue Trudaine 75009 Paris\\
Laboratoire J.-L. Lions\\
 Universite de Paris VII, Denis Diderot}
\email{claude.bardos@gmail.com}

\author[M.C. Lopes Filho]{Milton C. Lopes Filho}
\address[M. C. Lopes Filho]
{Depto. de Matem\'{a}tica, IMECC\\
Rua S\'{e}rgio Buarque de Holanda, 651\\
Universidade Estadual de Campinas - UNICAMP\\
 Campinas, SP 13083-859, Brasil}
\email{mlopes@ime.unicamp.br}

\author[D. Niu]{Dongjuan Niu}
\address[D. Niu]
{School of Mathematical Sciences\\
Capital Normal University \\
Beijing 100048, P. R. China} \email{djniu@cnu.edu.cn}

\author[H.J. Nussenzveig Lopes]{Helena J. Nussenzveig Lopes}
\address[H.J. Nussenzveig Lopes]
{Depto. de Matem\'{a}tica, IMECC\\
Rua S\'{e}rgio Buarque de Holanda, 651\\
Universidade Estadual de Campinas - UNICAMP\\
 Campinas, SP 13083-859, Brasil}
\email{hlopes@ime.unicamp.br}

\author[E.S. Titi]{Edriss S. Titi}
\address[E.S. Titi]
{Department of Mathematics \\
and Department of Mechanical and  Aerospace Engineering \\
University of California \\
Irvine, CA  92697-3875, USA. {\bf Also:}
 Department of Computer Science and Applied Mathematics \\
Weizmann Institute of Science  \\
Rehovot 76100, Israel. Fellow of the Center of Smart Interfaces (CSI), Technische Universit\"{a}t Darmstadt, Germany. } \email{etiti@math.uci.edu}
\email{edriss.titi@weizmann.ac.il}

\begin{abstract}
In this article we consider weak solutions of the three-dimensional incompressible fluid flow equations with initial data admitting
a one-dimensional symmetry group.  We examine both the viscous and inviscid cases. For the case of viscous flows, we
prove that Leray-Hopf weak solutions of the three-dimensional Navier-Stokes equations preserve initially imposed symmetry and that such
symmetric flows are stable under general three-dimensional perturbations, globally in time. We work in three different contexts: two-and-a-half-dimensional, helical and
axi-symmetric flows. In the inviscid case, we observe that, as a consequence of recent work by De Lellis and Sz\'ekelyhidi, there are genuinely
three-dimensional weak solutions of the Euler equations with two-dimensional initial data. We also present two partial results where restrictions on
the set of initial data, and on the set of admissible solutions rule out spontaneous symmetry breaking; one is due to P.-L. Lions and the other is a consequence of our viscous stability result.
\end{abstract}

\maketitle

{\bf MSC Subject Classifications:} 35Q35, 65M70.


{\bf Keywords:} Navier-Stokes equations, Euler equations, Leray-Hopf weak solutions, helical symmetry, uniqueness of weak solutions,
axi-symmetric flow.

\section{Introduction}

 In this article, we consider the equations for incompressible fluid motion:

\begin{equation} \label{NS}
\left\{ \begin{array}{l}
\partial_t u + (u \cdot \nabla) u = -\nabla p + \nu \Delta u + f, \\
\mbox{div } u = 0,
\end{array} \right. \end{equation}
supplemented by appropriate initial and boundary data. Above, $u = (u_1,u_2,u_3)$ is the fluid velocity and $p$ is the scalar pressure. The
external force $f$ and the kinematic viscosity $\nu \geq 0$ are given. System \eqref{NS} is referred to as the Navier-Stokes equations
in the viscous case $(\nu > 0)$, and as the Euler equations of ideal fluid motion in the inviscid case $(\nu = 0)$.

Let $u=u(t,x)=u(t,x_1,x_2,x_3)$ be a Leray-Hopf weak solution (see Definition \ref{LH}) of the Navier-Stokes equations \eqref{NS} for some $\nu > 0$ in a domain $\Omega$, with zero forcing. Assume that the domain $\Omega \subset \real^3$ and the initial velocity $u_0 \equiv u(0,\cdot)$  are symmetric
with respect to a one-parameter group which is invariant under the Navier-Stokes evolution. For example, one may think of flow in the full
three-dimensional space, which is periodic in all three directions, for which the initial velocity is periodic and invariant
under vertical translations, i.e., whose components do not depend on the vertical variable.  Our main new result is global-in-time stability in
the energy space of solutions which preserve the symmetry, within the class of Leray-Hopf weak solutions of the three-dimensional Navier-Stokes equations.
As a consequence, any Leray-Hopf weak solution of the three dimensional Navier-Stokes equations which starts symmetric will stay symmetric for positive time, ruling out spontaneous symmetry breaking within this class of weak solutions.  We will also see that, as a special case of a construction due to C. De Lellis and L. Sz\'{e}kelyhidi, spontaneous symmetry breaking does occur among weak solutions of the three-dimensional Euler equations.

Our analysis, in the viscous case, is closely related to weak-strong uniqueness results for the Leray-Hopf weak solutions of the incompressible 3D Navier-Stokes
system, a subject with an old, large and deep literature. Symmetric flows, regarded as a special class of three-dimensional flows, are more
regular than a general three-dimensional Leray-Hopf weak solution. The idea behind weak-strong uniqueness is to impose additional regularity assumptions on a
given weak solution in order to guarantee it is unique. Our point of departure is whether this additional regularity of symmetric weak solutions is enough
to ensure uniqueness.

  The first weak-strong uniqueness result for Leray-Hopf solutions of the Navier-Stokes equations is due to Sather and Serrin, see \cite{Serrin62}, and it is usually referred to as Sather-Serrin Uniqueness Criterion, see also the work of G. Prodi \cite{prodi}. Briefly stated, a weak solution in $L^q((0,T),L^p(\Omega))$ is unique if $ 3/p + 2/q  = 1$, $3<p<\infty$. Recently this criteria was extended to the  limit case $p=3,\, q=\infty$, see \cite{SeSv,ESeSv1,ESeSv2}.

  General two-dimensional flows, for example, are in $L^{\infty}((0,T);L^2) \cap L^2((0,T);H^1)$, which, by interpolation and Sobolev imbedding, are in $L^q((0,T);L^p(\Omega))$ with $2 \leq p < \infty$, $2 \leq q < 2p/(p-2)$ or $(p,q) = (2,\infty)$. We call this region of the extended $(p,q)$-plane $\mathcal{R}$. The hyperbola $3/p + 2/q = 1$ lies strictly above the region $\mathcal{R}$, approaching $\mathcal{R}$ only as $(p,q) \to (\infty,2)$. Hence, the Sather-Serrin criterion (or its extension to the limit case $p=3,\, q=\infty$) does not ensure uniqueness of two-dimensional flows, when viewed as three-dimensional flows. The terminology {\it two-dimensional flows}, in this work, means that the components of the velocity fields   do not depend on the vertical variable, $x_3$. We observe that, depending on the context,  the velocity fields of two-dimensional flows can have either two or three components. We also recall that two-dimensional flows are sometimes  called two-and-a-half-dimensional flows (denoted $2\frac{1}{2}D$  flows) when the velocity field has three non-trivial components (see, e.g., Section 2.3.1 of \cite{MB2002}) .

There is a large literature dedicated to extensions of the Sather-Serrin criterion, see \cite{Germain2006} and references therein. However, the results which have been obtained tend to obey the same scaling as the Sather-Serrin condition. The problems treated in the current paper are, in a sense, off-scale, and, therefore, only the extensions which have been obtained near the critical case $(\infty,2)$ are potentially relevant to our work. One particularly noteworthy result was established by H. Kozono and Y. Taniuchi, see \cite{KT2000} and it concerns extending the Sather-Serrin uniqueness criterion to vector fields which are bounded in $L^{\infty}((0,T);L^2(\real^3)) \cap L^2((0,T);BMO(\real^3))$. In fact, vector fields which are in $L^2((0,T;\dot{H^1}(\real^2))$, such as the solutions of the two-dimensional Navier-Stokes equations, are actually bounded in $L^2((0,T);BMO(\real^3))$ because $\dot{H^1}(\real^2) \subset BMO(\real^2) \subset BMO(\real^3)$ (we note emphatically that these vector fields are independent of the third variable). However, these vector fields are not square-integrable in $\real^3$, so we cannot use Kozono and Taniuchi's criterion to address uniqueness (or stability) of two-dimensional solutions viewed as three-dimensional flows.

The original argument in \cite{Serrin62} was formulated in an arbitrary domain, but, as in  Kozono and Taniuchi's result, the extensions have been full-space results making use of harmonic analysis machinery. Of course, to circumvent the fact that two-dimensional flows are not square integrable in full space, one should look for uniqueness among 3D flows in another domain, such as flows which are periodic in the third variable. It seems likely that one could adapt the proof of Kozono and Taniuchi's criterion to flows which are periodic in the third variable, and then obtain an uniqueness and stability result along the lines suggested above. However, in this work, we would like to take a more elementary approach, closer to Sather and Serrin's original argument, which works on a vertically periodic flow in cylindrical domain of general shape, and in other situations as well.

The problem of stability of two-dimensional flows under three-dimensional perturbations is very natural and interesting from the physical point of view, and it has been the subject of previous work. The first results in this direction were obtained by G. Ponce, R. Racke, T.C. Sideris and E.S. Titi, see \cite{PRST94}. Their main result is global existence of a strong solution which starts close, in $H^1$ to a two-dimensional solution, also a stability estimate. Their result was later improved in \cite{Iftimie99,Mucha08}, by relaxing regularity conditions on the perturbation, but always working in the class of strong solutions, and therefore, focusing their concern on global existence, rather than stability. Our work may be regarded as an extension of these articles to weak Leray-Hopf solutions.

We are going to prove uniqueness and stability results for Leray-Hopf weak solutions in three different contexts:
\begin{enumerate}
\item[(i)] two-dimensional flow in an infinite straight cylinder with bounded and smooth cross-section, with no-slip boundary condition and three-dimensional perturbations which are periodic in the vertical direction;
\item[(ii)] helical flow in a straight circular cylinder, with no-slip boundary condition and general three-dimensional perturbations with the same period as the helical flow;
\item[(iii)] axi-symmetric flow in the interior of an axi-symmetric torus with smooth cross-section bounded away from the symmetry axis, no-slip boundary condition and a general three dimensional perturbation.
\end{enumerate}

In each case, existence of a symmetric weak solution for the Navier-Stokes equations when the initial data is symmetric is an implicit requirement of our analysis, and can be obtained by an easy adaptation of the classical argument by Leray.  Global well-posedness of weak solutions is also known in all three cases. We emphasize that these well-posedness results all refer to the corresponding symmetry-reduced equations.
For case (i), it was pointed out in Proposition 2.7 of \cite{MB2002}, that global existence of two-dimensional flows, regarded as three-dimensional flows, reduces to the global well-posedness result of weak solutions of the standard 2D Navier-Stokes equations in a bounded domain, which was established in \cite{Ladyzhenskaya1959}. For global existence and uniqueness of weak solutions in case (ii) see \cite{MTL} and, in case   (iii), see \cite{LadyBook,UY}.

Concerning the inviscid case, we discuss three results. The first result is existence of a genuinely 3D weak solution of the 3D Euler equations in a periodic cube, with two-dimensional initial data. The result is a special case of a construction by De Lellis and Sz\'ekelyhidi in \cite{DS10}, as formulated by Wiedemann in \cite{Wiedemann}. The second result is a consequence of a weak-strong uniqueness theorem
for dissipative solutions of the Euler equations, due to P.-L. Lions in \cite{pllions}. The third result is a corollary of our viscous stability estimates, applied to Euler solutions which are vanishing viscosity limits in a specific way.

The remainder of this work is divided into four sections. Section 2 contains basic definitions and notation, Section 3 concerns the viscous results, Section 4 contains the inviscid results, and Section 5 is final comments and conclusions.

\section{Preliminaries}

In this section we fix notation and set down some basic definitions. In this article, we are concerned with incompressible flows in three contexts - triply periodic flow in a box, flow in an infinite vertical
cylinder whose horizontal cross sections are bounded and smooth and which are periodic in the vertical direction and flows in a bounded axi-symmetric domain. To discuss the incompressible
Navier-Stokes equations in these contexts we first introduce the Hilbert spaces usually denoted by
$H$ and $V$ in the literature, adjusting things according to the specific case as follows:

\begin{enumerate}

\item for the periodic box $\Omega \equiv (0,1)^3$, the spaces $H(\Omega)$ and $V(\Omega)$ are the closure of the $C^{\infty}$, periodic, divergence-free
vector fields in $\Omega$ with respect to the $L^2$ and $H^1$ norms in $\Omega$, respectively.

\item for the periodic cylinder $\Omega = D \times (0,L)$, where $L>0$ and $D$ is a bounded smooth domain in $\real^2$, the spaces $H(\Omega)$ and $V(\Omega)$ are the closure  of the $C^{\infty}$ vector fields which are, periodic in the vertical variable, compactly supported in the horizontal sections and divergence-free in $\Omega$ with respect to the $L^2$ and $H^1$ norms in $\Omega$, respectively.

\item for a smooth axi-symmetric domain $\Omega$,   the spaces $H(\Omega)$ and $V(\Omega)$ are the closure  of the $C^{\infty}$ vector fields which are, compactly supported and divergence-free in $\Omega$ with respect to the $L^2$ and $H^1$ norms in $\Omega$, respectively.
\end{enumerate}

We denote by $\mathcal{D}(\Omega)$ the space of $C^{\infty}$ test functions,  periodic in the case of the cube, compactly supported for the axi-symmetric domain and and periodic in the vertical variable, compactly supported in the horizontal direction for the periodic cylinder. We will
also use the notation $H_w(\Omega)$ for the vector space $H(\Omega)$ endowed with the weak topology in $L^2$.

Let us recall the definition of a Leray-Hopf weak solution of the Navier-Stokes system:

\begin{defn} \label{LH}
Let $\Omega$ be either the periodic box, the periodic cylinder or an axi-symmetric domain as above and let
$u \in C^0([0,\infty);H_w(\Omega)) \cap L^{\infty}((0,\infty);H(\Omega)) \cap L^2_{\loc}([0,\infty);V(\Omega))$.
Then $u$ is a weak Leray-Hopf solution of \eqref{NS} with initial data $u_0 \in H(\Omega)$ and forcing $f \in L^2((0,T);H^{-1}(\Omega))$ if:
\begin{enumerate}

\item for any test function $\phi \in C^{\infty}_c([0,\infty);\mathcal{D}(\Omega))$ such that $\mbox{ div} \phi = 0$ we have:
\[\int_0^{\infty} \int_{\Omega} \left\{ - \partial_t \phi \cdot u - [( u \cdot \nabla) \phi ]\cdot u + \nu \nabla u : \nabla \phi \right\} \,dx dt - \int_{\Omega} u_0 \cdot \phi(0,x)\, dx \]
\[= \int_0^{\infty}\langle \phi(t,\cdot), f(t,\cdot) \rangle \,dt,\]
where $A:B \equiv \sum_{i,j} a_{ij}b_{ij}$ is the trace product of two matrices and $\langle \cdot,\cdot \rangle$ denotes the duality pairing between $H^1_0(\Omega)$ and $H^{-1}(\Omega)$.

Additionally,

\item for any $t > 0$,

\begin{equation} \label{eneest}
\|u(t,\cdot)\|_{L^2(\Omega)}^2 + 2 \nu \int_0^t \|\nabla u(s,\cdot)\|_{L^2(\Omega)}^2 ds \leq \| u_0\|_{L^2(\Omega)}^2 + 2 \int_0^t \langle u(s,\cdot),f(s,\cdot)\rangle \, ds.
\end{equation}

\end{enumerate}
\end{defn}

Note that by density arguments, and the continuity of the terms used in the identity in item (1) of Definition \ref{LH}, one can extend the  Definition \ref{LH}  to allow for the use of test functions $\phi \in C^{\infty}_c([0,\infty);V(\Omega))$  in the identity in item (1) of Definition \ref{LH}.

We also require a definition of weak solution for the Euler equations, but only in the case of the periodic box and without forcing, see \cite{DS10,Wiedemann}.

\begin{defn}
Let $\Omega = (0,1)^3$ be the periodic box and let $u \in C^0([0,\infty);H_w(\Omega))$. We say that $u$ is a weak
solution of the Euler equations (\eqref{NS}, $\nu = 0$) with initial velocity $u_0 \in H(\Omega)$
if  for any test function $\phi \in C^{\infty}_c([0,\infty);\mathcal{D}(\Omega))$ such that $\mbox{ div} \phi = 0$ we have:
\[ \int_0^{\infty} \int_{\Omega} \left \{ \partial_t \phi \cdot u + [( u \cdot \nabla) \phi ]\cdot u  \right\}\,dx dt + \int_{\Omega} u_0 \cdot \phi(0,x) \, dx = 0.\]
\end{defn}

\section{Viscous flow}

In this section we will state and prove stability results for Leray-Hopf weak solutions of the Navier-Stokes equations \eqref{NS}, with zero forcing, in the three contexts described in the introduction.

We start with three-dimensional perturbations of two-dimensional flows. Recall that two-dimensional flows refer to solutions of the three-dimensional Navier-Stokes equations which are independent of $x_3$.

\begin{theorem}\label{2andHalfD}
Let $D \subset \real^2$ be a bounded domain with smooth boundary. Consider $u_0 \in H(D)$ and let $u \in C^0([0,\infty);H_w(D)) \cap L^2((0,\infty);V(D))$ be the unique weak solution of the $2\frac{1}{2}D$  incompressible Navier-Stokes equations having, as initial data, $u_0$. Fix $L>0$ and set $C = D \times (0,L)$. Let $v \in L^{\infty}((0,\infty);H(C)) \cap L^2((0,\infty);V(C))$ be a Leray-Hopf weak solution of the three-dimensional incompressible Navier-Stokes equations with initial data $v_0$, where $v_0 \in H(C)$.
The following estimate holds true:
\[\|v-u\|_{L^2(C)}^2 (t)\leq \|v_0-u_0\|_{L^2(C)}^2 \exp{\left(\frac{27}{64\nu^4}\|u_0\|_{L^2(D)}^4\right)}, \mbox{ for all } t \geq 0.\]
\end{theorem}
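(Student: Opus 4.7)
\emph{Setup via weak--strong uniqueness.} The plan is a Sather--Serrin-type argument. The 2D solution $u$, extended trivially in $x_3$, is regular enough to serve as a test function in the weak formulation for $v$ on $C=D\times(0,L)$. Setting $w:=v-u$, I would mollify $u$ in time to make it admissible, combine the weak formulation for $v$ tested against $u$ with the 2D energy equality for $u$ (lifted to $C$ by a factor $L$) and the Leray--Hopf energy inequality for $v$, and use $\operatorname{div} w=0$ together with the no-slip / vertical-periodicity boundary data to obtain the relative-energy inequality
\[\tfrac{1}{2}\|w(t)\|_{L^2(C)}^2 + \nu\int_0^t \|\nabla w\|_{L^2(C)}^2\,ds \leq \tfrac{1}{2}\|w_0\|_{L^2(C)}^2 - \int_0^t \int_C ((w\cdot\nabla)u)\cdot w\,dx\,ds.\]

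\emph{Trilinear bound by 2D slicing.} Integrating by parts rewrites the trilinear integrand as $-((w\cdot\nabla)w)\cdot u$, so Cauchy--Schwarz gives $|\mathrm{tri}|\leq \|uw\|_{L^2(C)}\|\nabla w\|_{L^2(C)}$. To bound $\|uw\|_{L^2(C)}$, I would Fubini in $x_3$, use H\"older $(2,2)$ in $x_h$ (exploiting $\partial_3 u=0$) to separate $u$ and $w$, use the integral Minkowski inequality to push the $L^2(D)$ norm inside the $x_3$-integral of $|w|^2$, and apply the sharp 2D Ladyzhenskaya inequality $\|f\|_{L^4(D)}^2\leq \sqrt{2}\,\|f\|_{L^2(D)}\|\nabla f\|_{L^2(D)}$ both to $u$ and slice-wise to $w$ (legitimate because $u$ and $v$ are both no-slip on $\partial D$, so $w$ has zero horizontal trace). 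Cauchy--Schwarz in $x_3$ then yields
\[|\mathrm{tri}| \leq \sqrt{2}\;\|u\|_{L^2(D)}^{1/2}\|\nabla u\|_{L^2(D)}^{1/2}\;\|w\|_{L^2(C)}^{1/2}\|\nabla w\|_{L^2(C)}^{3/2}.\]

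\emph{Young and Gr\"onwall.} Young's inequality with conjugate exponents $(4/3,4)$, tuned so that the $\|\nabla w\|^2$ coefficient equals $\nu$, gives
\[|\mathrm{tri}|\leq \nu\|\nabla w\|_{L^2(C)}^2 + \tfrac{27\cdot(\sqrt{2})^4}{256\,\nu^3}\|u\|_{L^2(D)}^2\|\nabla u\|_{L^2(D)}^2\|w\|_{L^2(C)}^2 = \nu\|\nabla w\|_{L^2(C)}^2 + \tfrac{27}{64\,\nu^3}\|u\|_{L^2(D)}^2\|\nabla u\|_{L^2(D)}^2\|w\|_{L^2(C)}^2.\]
Absorbing $\nu\|\nabla w\|^2$ into the left-hand side of the relative-energy inequality and applying Gr\"onwall yields
\[\|w(t)\|_{L^2(C)}^2 \leq \|w_0\|_{L^2(C)}^2 \exp\!\left(\tfrac{27}{32\,\nu^3}\int_0^t \|u(s)\|_{L^2(D)}^2\|\nabla u(s)\|_{L^2(D)}^2\,ds\right).\]
Finally, the 2D energy equality gives $\|u(s)\|_{L^2(D)}\leq\|u_0\|_{L^2(D)}$ and $\int_0^\infty \|\nabla u\|_{L^2(D)}^2\,d\tau\leq \|u_0\|_{L^2(D)}^2/(2\nu)$, so the exponent is bounded, uniformly in $t$, by $27\|u_0\|_{L^2(D)}^4/(64\,\nu^4)$, matching the claim exactly.

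\emph{Main obstacle.} The delicate step is the rigorous derivation of the relative-energy inequality, since $\partial_t u$ has only $L^2_t H^{-1}_x$ regularity and so $u$ cannot be substituted directly into the weak formulation of $v$; a standard time-mollification (or Galerkin-limit) argument is required, and the cancellation between the $u$ and $v$ trilinear cross-terms (which uses $\operatorname{div} w=0$) must be tracked through the limit. Everything else is bookkeeping: the constant $27/64$ comes from the sharp Ladyzhenskaya constant $\sqrt{2}$ and the $(4/3,4)$ Young split, and the $\nu^{-4}$ arises as one $\nu^{-3}$ from Young and one $\nu^{-1}$ from the energy bound on $\int\|\nabla u\|^2$.
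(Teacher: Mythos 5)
Your proposal is correct and follows essentially the same route as the paper: a Sather--Serrin relative-energy argument via time mollification, a slice-wise two-dimensional Ladyzhenskaya estimate of the trilinear term exploiting the $x_3$-independence of $u$, a $(4/3,4)$ Young split tuned to absorb $\nu\|\nabla w\|^2$, Gr\"onwall, and the 2D energy inequality to bound the exponent. The only cosmetic difference is that you apply Ladyzhenskaya to both $u$ and $w$ before Gr\"onwall (carrying $\|u\|_{L^2(D)}^2\|\nabla u\|_{L^2(D)}^2$ in the exponent) whereas the paper keeps $\|u\|_{L^4(D)}^4$ and converts it at the end; both yield the same constant $27/(64\nu^4)$.
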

\begin{proof}[Proof of Theorem \ref{2andHalfD}]
We begin by following the argument in the proof of Theorem 6 in \cite{Serrin62}. Fix $T>0$ and let $\eta_{\vare}$ be a standard $1$-dimensional mollifier (smooth, non-negative,  even, supported in $(-\vare,\vare)$ and with unit integral). Define
\[u^{\vare} = u^{\vare}(t,x) \equiv \int_0^T \eta_{\vare}(t-s)u(s,x)\,ds.\]
Define in an analogous manner $v^{\vare} = v^{\vare}(t,x)$.

Then, using $u^{\vare}$ as test function in the weak formulation of the equation for $v$, and $v^{\vare}$ as test function for the equation for $u$, we find the following two identities:
\begin{eqnarray}
-(u,v^{\vare})(T) + \int_0^T (u,\partial_t v^{\vare})\,ds - \nu \int_0^T (\nabla u,\nabla v^{\vare})\,ds \\ \nonumber = - \int_0^T ((u\cdot\nabla)v^{\vare},u)\,ds - (u_0,v^{\vare}_0);
\end{eqnarray}

\begin{eqnarray}
-(v,u^{\vare})(T) + \int_0^T (v,\partial_t u^{\vare})\,ds - \nu \int_0^T (\nabla v,\nabla u^{\vare})\,ds \\ \nonumber = - \int_0^T ((v\cdot\nabla)u^{\vare},v)\,ds - (v_0,u^{\vare}_0),
\end{eqnarray}
where $(\cdot,\cdot)$ denotes the inner product in $L^2(C)$.

We add these two identities, using the fact that $\int_0^T (u,\partial_t v^{\vare})\,ds = - \int_0^T (v,\partial_t u^{\vare})\,ds$, and we find

\begin{eqnarray} \label{deducefromweakform}
-(u,v^{\vare})(T) -(v,u^{\vare})(T) - \nu \int_0^T [(\nabla u,\nabla v^{\vare}) + (\nabla v,\nabla u^{\vare})]\,ds \\ \nonumber = - \int_0^T [((u\cdot\nabla)v^{\vare},u) + ((v\cdot\nabla)u^{\vare},v) ]\,ds - (u_0,v^{\vare}_0) - (v_0,u^{\vare}_0).
\end{eqnarray}

We multiply \eqref{deducefromweakform} by $2$ and let $\vare \to 0$ to obtain

\begin{eqnarray} \label{deducefromweakformGOOD}
-2(u,v)(T) - 4\nu \int_0^T (\nabla u,\nabla v) \,ds \\ \nonumber = 2\int_0^T (((v-u)\cdot\nabla)(v-u),u)  \,ds - 2(u_0,v_0).
\end{eqnarray}

Next, we use the energy inequality, satisfied by both $u$ and $v$ (see Definition \ref{LH}):

\begin{equation} \label{eninequ1}
\|u\|_{L^2(C)}^2 (T) + 2\nu \int_0^T \|\nabla u (s,\cdot)\|_{L^2(C)}^2 \,ds \leq \|u_0\|_{L^2(C)}^2;
\end{equation}

\begin{equation} \label{eninequ2}
\|v\|_{L^2(C)}^2 (T) + 2\nu \int_0^T \|\nabla v(s,\cdot)\|_{L^2(C)}^2 \,ds \leq \|v_0\|_{L^2(C)}^2.
\end{equation}

Introduce $w \equiv v - u$ and add \eqref{deducefromweakformGOOD}, \eqref{eninequ1} and \eqref{eninequ2} to find

\begin{equation} \label{diffineqw}
\|w\|_{L^2(C)}^2 (T) + 2\nu \int_0^T \|\nabla w\|_{L^2(C)}^2 \,ds \leq \|w_0\|_{L^2(C)}^2 + 2\int_0^T (w\cdot\nabla w, u) \,ds.
\end{equation}

This is precisely inequality (27) in \cite{Serrin62}. At this point we depart from the argument presented in \cite{Serrin62} and use the fact that $u$ is two-dimensional. We analyze the nonlinear term using the two-dimensional Ladyzhenskaya inequality in $D$:

\[
\int_0^T ((w\cdot\nabla)w,u)\,ds = \int_0^T\int_0^L\int_D [(w\cdot\nabla)w] \cdot u \,dx_1dx_2dx_3ds \]
\[\leq \int_0^T\int_0^L \|w\|_{L^4(D)}\|\nabla w\|_{L^2(D)}\|u\|_{L^4(D)}\,dx_3ds
\]
\[\leq 2^{1/4} \int_0^T\int_0^L \|w\|_{L^2(D)}^{1/2}\|(\partial_{x_1},\partial_{x_2}) w\|_{L^2(D)}^{1/2}\|\nabla w\|_{L^2(D)}\|u\|_{L^4(D)}\,dx_3ds
\]
\[\leq 2^{1/4} \int_0^T\int_0^L \|w\|_{L^2(D)}^{1/2}\|\nabla w\|_{L^2(D)}^{3/2}\|u\|_{L^4(D)}\,dx_3ds
\]
\[\leq \nu\int_0^T\int_0^L \|\nabla w\|_{L^2(D)}^2 \,dx_3ds  + \frac{27}{128\nu^3}\int_0^T\int_0^L \|w\|_{L^2(D)}^2  \|u\|_{L^4(D)}^4\,dx_3ds,
\]
by Young's inequality. Therefore, using the fact that $\|u(s,\cdot)\|_{L^4(D)}^4$ is independent of $x_3$, we obtain

\begin{equation} \label{chato1}
\int_0^T ((w\cdot\nabla)w,u)\,ds
\leq \nu\int_0^T\|\nabla w\|_{L^2(C)}^2 \,ds  + \frac{27}{128\nu^3}\int_0^T \|u\|_{L^4(D)}^4 \|w\|_{L^2(C)}^2  \,ds.
\end{equation}

We input \eqref{chato1} in \eqref{diffineqw} to find

\begin{equation} \label{diffineqw2}
\|w\|_{L^2(C)}^2 (T)  \leq \|w_0\|_{L^2(C)}^2 +\frac{27}{64\nu^3} \int_0^T \|u\|_{L^4(D)}^4 \|w\|_{L^2(C)}^2 \,ds.
\end{equation}

Therefore, by Gronwall's Lemma we deduce that
\begin{equation} \label{chato2} \|w\|_{L^2(C)}^2(T) \leq \|w_0\|_{L^2(C)}^2 \exp{\left(\frac{27}{64\nu^3}\int_0^T \|u\|_{L^4(D)}^4\,ds\right)}.\end{equation}

Finally,  we use again the Ladyzhenskaya inequality to estimate:

\[ \int_0^T \|u\|_{L^4(D)}^4 ds \leq \ 2 \int_0^T \| u \|_{L^2(D)}^2 \|\nabla u\|_{L^2(D)} ^2 ds  \]
\[ \leq 2  \| u \|_{L^{\infty}((0,T);L^2(D))}^2  \| \nabla u \|_{L^2((0,T);L^2(D))}^2,\]
which, using \eqref{eninequ1} together with the fact that $u$ is independent of $x_3$ yields the desired result, once we replace $T$ by an arbitrary $t\geq 0$
and notice that the dependence on $L$ cancels out.
\end{proof}

\begin{rem} An immediate corollary of Theorem \ref{2andHalfD} is the uniqueness of Leray-Hopf weak solutions for two-dimensional initial data.
\end{rem}

\vspace{0.5cm}

Next, we will examine a variant of Theorem \ref{2andHalfD},
pertaining to helical flows.

A vector field $U$ is called helical, with {\it step} $\sigma \in \real \setminus \{0\}$ if, for any $\theta \in \real$ and any $x \in \real^3$,
\[U\left(\left[\begin{array}{ccc}
\cos\theta & \sin\theta & 0 \\
-\sin\theta & \cos\theta & 0 \\
0 & 0 & 1 \end{array} \right]x + \left[\begin{array}{c}
															0 \\ 0 \\ \frac{\sigma}{2\pi}\theta \end{array} \right]\right) = U(x).\]
We refer the reader to \cite{MTL} for well-posedness results for the Navier-Stokes equations with helical symmetry. For simplicity, we will focus on the special case of helical flows in a straight circular pipe.

\begin{theorem} \label{Helical}
Let $D$ be the unit disk in the plane, while $C$ denotes the unit cylinder
$D \times (0,1)$. Let $u_0 \in H(C)$ be a helical vector field
with step equal to $1$. Let $u \in C^0([0,\infty);H_w(C)) \cap
L^2((0,\infty);V(C))$ be the unique weak solution of the helical
incompressible Navier-Stokes equations having, as initial data,
$u_0$, given in Theorem 3.3 of \cite{MTL}. Let $v_0 \in H(C)$ and let $v \in
C^0([0,\infty);H_w(C)) \cap L^2((0,\infty);V(C))$ be a
Leray-Hopf weak solution of the three-dimensional incompressible
Navier-Stokes equations with initial data $v_0$.
Then, the following inequality is valid:
\[\|v-u\|_{L^2(C)}^2 (t)\leq \|v_0-u_0\|_{L^2(C)}^2 \exp{\left(\frac{27}{64\nu^4}\|u_0\|_{L^2(C)}^4\right)}, \mbox{ for all } t \geq 0.
\]
\end{theorem}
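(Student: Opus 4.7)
The plan is to adapt the proof of Theorem \ref{2andHalfD} almost verbatim, replacing the use of ``$u$ independent of $x_3$'' by a symmetry observation for helical vector fields on the disk. The mollification argument of Serrin which constitutes the first half of the proof of Theorem \ref{2andHalfD} produces the inequality
\[
\|w\|_{L^2(C)}^2(T) + 2\nu \int_0^T \|\nabla w\|_{L^2(C)}^2 ds \leq \|w_0\|_{L^2(C)}^2 + 2\int_0^T ((w\cdot\nabla)w, u)\, ds,
\]
with $w \equiv v-u$; it depends only on $u$ and $v$ being three-dimensional Leray-Hopf weak solutions in $C$ together with their energy inequalities, and so transfers to the helical setting without change.

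To estimate the nonlinear term I would again work slice by slice, applying the two-dimensional Ladyzhenskaya inequality on $D \times \{x_3\}$ and absorbing one factor into $\nu \int \|\nabla w\|_{L^2(C)}^2$ by Young's inequality, obtaining the analogue of \eqref{chato1}:
\[
\int_0^T ((w\cdot\nabla)w, u)\, ds \leq \nu \int_0^T \|\nabla w\|_{L^2(C)}^2 ds + \frac{27}{128\nu^3} \int_0^T \!\!\int_0^1 \|u(s,\cdot,x_3)\|_{L^4(D)}^4 \|w(s,\cdot,x_3)\|_{L^2(D)}^2 dx_3 ds.
\]
The key new observation is that, for a helical $u$ of step $1$, the slice at height $x_3$ is identified with the slice at height $0$ by the horizontal rotation $R_{2\pi x_3}$; since the unit disk is rotation invariant, $R_{2\pi x_3}$ is a measure-preserving bijection of $D$, so $\|u(s,\cdot,x_3)\|_{L^p(D)}$ is independent of $x_3$ for every $p$. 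Combined with the fact that the rotation acts orthogonally on the gradient, this also yields $x_3$-invariance of $\|\nabla u(s,\cdot,x_3)\|_{L^2(D)}$ and of $\|u(s,\cdot,x_3)\|_{L^2(D)}$.

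Given these invariances, one pulls $\|u(s,\cdot,0)\|_{L^4(D)}^4$ out of the $x_3$-integral and uses $\int_0^1 \|w(s,\cdot,x_3)\|_{L^2(D)}^2 dx_3 = \|w(s,\cdot)\|_{L^2(C)}^2$; Gronwall's lemma then gives
\[
\|w\|_{L^2(C)}^2(T) \leq \|w_0\|_{L^2(C)}^2 \exp\!\left(\frac{27}{64\nu^3}\int_0^T \|u(s,\cdot,0)\|_{L^4(D)}^4 ds\right).
\]
The exponent is controlled by applying the two-dimensional Ladyzhenskaya inequality once more at $x_3 = 0$, noting that because $L = 1$ the slice invariance identifies $\|u(s,\cdot,0)\|_{L^2(D)}$ and $\|\nabla u(s,\cdot,0)\|_{L^2(D)}$ with their counterparts on $C$, and then invoking the energy inequality on $C$. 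This produces the claimed constant $\frac{27}{64\nu^4}\|u_0\|_{L^2(C)}^4$. I expect the only real subtlety to be the verification of the slice-invariance properties, in particular that $\|\nabla u(\cdot,\cdot,x_3)\|_{L^2(D)}$ is $x_3$-independent: this requires explicitly computing how the helical change of variable acts on the gradient, and uses in an essential way that the cross-section is a disk.
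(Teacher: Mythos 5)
Your proposal is correct and follows essentially the same route as the paper: the paper's proof simply reruns the $2\frac{1}{2}D$ argument after recording exactly the two facts you isolate, namely that the $L^p(D)$-norms of the slices of $u$ are independent of $x_3$ and that the $L^2(D)$-norm of the horizontal gradient of $u$ on each slice is independent of $x_3$ and controlled by $\|\nabla u\|_{L^2(C)}$. Your justification of these facts via the screw motion acting as a measure-preserving rotation of the disk (and orthogonally on the gradient) is the intended, correct argument, and your bookkeeping with $L=1$ reproduces the stated constant.
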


\begin{proof}[Proof of Theorem \ref{Helical}]
We can use the same proof as for the $2\frac{1}{2}D$ case once we make the following observations:
\begin{verse}
(i) the $L^p(D)$-norms of $u$ are independent of $x_3$, for any $p\geq 1$; \\
(ii) the $L^2(D)$-norm of $(\partial_{x_1},\partial_{x_2}) u$ is independent of $x_3$ and bounded above by the $L^2(C)$-norm of $\nabla u$.
\end{verse}
\end{proof}

\begin{rem} As before, this easily yields uniqueness of Leray-Hopf weak solutions with helical initial data.
\end{rem}

\vspace{0.5cm}

Lastly, we discuss the case of axi-symmetric flows.

\begin{theorem} \label{Axisymmetric}
Let $D$ be a bounded, smooth domain compactly contained in $\{(r,z)\;|\; 0<r<\infty,\,z \in \real\}$ and set $C = \{(r,z,\theta)\;|\;(r,z)\in D,\,0\leq\theta \leq 2\pi\}$. Let $u_0 \in H(C)$ be an axially symmetric vector field. Let $u \in C^0([0,\infty);H_w(C)) \cap L^2((0,\infty);V(C))$ be the unique weak solution of the  axi-symmetric incompressible Navier-Stokes equations having, as initial data, $u_0$, given in \cite{LadyBook,UY}. Let $v_0 \in H(C)$ and let $v \in C^0([0,\infty);H(C)) \cap L^2((0,\infty);V(C))$ be a Leray-Hopf weak solution of the three-dimensional incompressible Navier-Stokes equations with initial data $v_0$. There exists a constant $M=M(D,\nu) >0$ such that the following inequality is valid:
\[\|v-u\|_{L^2(C)}^2 (t)\leq \|v_0-u_0\|_{L^2(C)}^2 \exp{\left(M\|u_0\|_{L^2(C)}^4\right)}, \mbox{ for all } t \geq 0.
\]
\end{theorem}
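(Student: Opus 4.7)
The strategy is to mirror the proof of Theorem \ref{2andHalfD}, working in cylindrical coordinates $(r,z,\theta)$ adapted to the axis of symmetry. First, an identical Sather--Serrin time-mollification argument applied to the weak formulations of the equations for $u$ and $v$, combined with the two Leray--Hopf energy inequalities, yields
\begin{equation*}
\|w\|_{L^2(C)}^2(T) + 2\nu \int_0^T \|\nabla w\|_{L^2(C)}^2\, ds \leq \|w_0\|_{L^2(C)}^2 + 2 \int_0^T ((w\cdot\nabla)w, u)\, ds,
\end{equation*}
with $w \equiv v - u$; axi-symmetry plays no role in this step, which is identical to the derivation of \eqref{diffineqw}.

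For the nonlinear term I use the axi-symmetry of $u$ together with the hypothesis that $D$ is compactly contained in $\{r > 0\}$, which guarantees $0 < r_{\min} \leq r \leq r_{\max} < \infty$ on $D$. Writing the integral over $C$ in cylindrical coordinates (volume element $r\, dr\, dz\, d\theta$) and applying H\"older at fixed $\theta$ in the meridian disc $D$ gives
\begin{equation*}
\int_D |w(\cdot,\theta)|\,|\nabla w(\cdot,\theta)|\,|u| \, r\, dr\, dz \leq r_{\max} \|w(\cdot,\theta)\|_{L^4(D)}\|\nabla w(\cdot,\theta)\|_{L^2(D)}\|u\|_{L^4(D)},
\end{equation*}
where $L^p(D)$ denotes the unweighted two-dimensional norm and $u$ is $\theta$-independent. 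Since $w$ vanishes on $\partial C$, the slice $w(\cdot,\theta)$ belongs to $H^1_0(D)$ for a.e. $\theta$, and the two-dimensional Ladyzhenskaya inequality in $D$ together with the pointwise bound $|(\partial_r,\partial_z) w_j|^2 \leq |\nabla w_j|^2$ for each Cartesian component produces
\begin{equation*}
\|w(\cdot,\theta)\|_{L^4(D)}^2 \leq C \|w(\cdot,\theta)\|_{L^2(D)}\|\nabla w(\cdot,\theta)\|_{L^2(D)}.
\end{equation*}
Integrating over $\theta$ by H\"older with exponents $4$ and $4/3$, and converting unweighted $L^2(D)$ slice norms into three-dimensional $L^2(C)$ norms at a cost of a factor $1/r_{\min}$, one obtains
\begin{equation*}
|((w\cdot\nabla)w, u)| \leq C(D)\|u\|_{L^4(D)}\|w\|_{L^2(C)}^{1/2}\|\nabla w\|_{L^2(C)}^{3/2}.
\end{equation*}
Young's inequality absorbs $\nu\|\nabla w\|_{L^2(C)}^2$ into the left-hand side of the energy inequality, and Gronwall's lemma then gives
\begin{equation*}
\|w\|_{L^2(C)}^2(T) \leq \|w_0\|_{L^2(C)}^2 \exp\!\left(\frac{C(D)}{\nu^3}\int_0^T \|u\|_{L^4(D)}^4\, ds\right).
\end{equation*}

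To close the proof I bound $\int_0^T \|u\|_{L^4(D)}^4\, ds$ by one more application of the two-dimensional Ladyzhenskaya inequality on $D$ applied to $u$, the comparability (via $r \in [r_{\min}, r_{\max}]$) between axi-symmetric $L^2(D)$ norms and the cylindrical $L^2(C)$ norms, and the Leray--Hopf energy inequality for $u$ on $C$; this yields $\int_0^T \|u\|_{L^4(D)}^4\, ds \leq C(D,\nu)\|u_0\|_{L^2(C)}^4$, producing the desired exponential factor with $M = M(D,\nu)$. The main obstacle is the technical bookkeeping around the cylindrical decomposition of a three-dimensional vector field and the passage between slice-wise Cartesian-component norms on $D$ and the genuine three-dimensional norms on $C$. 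The assumption that $D$ is bounded away from the axis of symmetry enters precisely at this bookkeeping step, since it keeps the weight $r$ (and its inverse) between two positive constants and avoids the geometric singularity that would otherwise arise at $r=0$.
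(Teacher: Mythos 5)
Your proposal is correct and follows essentially the same route as the paper: the Sather--Serrin mollification step to reach the energy inequality for $w=v-u$, a slice-wise two-dimensional Ladyzhenskaya estimate in the meridian section $D$ (valid because $D$ is bounded away from the axis), Young and Gronwall, and a final Ladyzhenskaya-plus-energy bound on $\int_0^T\|u\|_{L^4(D)}^4\,ds$. The only cosmetic difference is that you convert the weighted measure $r\,dr\,dz$ to the unweighted one using $r_{\min},r_{\max}$, whereas the paper keeps the weighted norms and absorbs the dependence on $a=r_{\min}$ into the Ladyzhenskaya constant $K$.
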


\begin{proof}[Proof of Theorem \ref{Axisymmetric}]
We must make small modifications of the proof for the $2\frac{1}{2}D$ case, beginning by writing the integral over $C$ as $\int_0^{2\pi}\int_{D}$ with respect to the measure $rdrdzd\theta$.

We estimate the nonlinear term as follows:
\[
\int_0^T ((w\cdot\nabla)w,u)\,ds = \int_0^T\int_0^{2\pi}\int_D [(w\cdot\nabla)w] \cdot u \,rdrdzd\theta ds \]
\[\leq \int_0^T\int_0^{2\pi} \|w\|_{L^4(D,rdrdz)}\|\nabla w\|_{L^2(D,rdrdz)}\|u\|_{L^4(D,rdrdz)}\,d\theta ds
\]
\[\leq K\int_0^T\int_0^{2\pi} \|w\|_{L^2(D,rdrdz)}^{1/2}\|(\partial_{r},\partial_z) w\|_{L^2(D,rdrdz)}^{1/2}\|\nabla w\|_{L^2(D,rdrdz)}\|u\|_{L^4(D,rdrdz)}\,d\theta ds,
\]
where $K>0$ is a constant appearing in the two-dimensional Ladyzhenskaya inequality in $D$, valid since $w$ vanishes on the boundary of $D$ for each
fixed $\theta$, together with the fact that $D$ is bounded away from the axis of symmetry, so that $r>a$, for some fixed $a>0$;
\[\leq K\int_0^T\int_0^{2\pi} \|w\|_{L^2(D,rdrdz)}^{1/2}\|\nabla w\|_{L^2(D,rdrdz)}^{3/2}\|u\|_{L^4(D,rdrdz)}\,d\theta ds
\]
\[\leq \nu \int_0^T\int_0^{2\pi} \|\nabla w\|_{L^2(D,rdrdz)}^2 \,d\theta ds  + \widetilde{K} \int_0^T\int_0^{2\pi} \|w\|_{L^2(D,rdrdz)}^2  \|u\|_{L^4(D,rdrdz)}^4\,d\theta ds,
\]
for some $\widetilde{K} >0$, resulting from using Young's inequality,
\[\leq \int_0^T\|\nabla w\|_{L^2(C)}^2 \,ds  + \widetilde{K} \int_0^T \frac{1}{2\pi}\|u\|_{L^4(C)}^4 \|w\|_{L^2(C)}^2  \,ds,
\]
where we have used the fact that $\|u(\cdot,\theta)\|_{L^4(D,rdrdz)}^4$ is independent of $\theta$ and is equal to $(1/2\pi)\|u\|_{L^4(C)}^4$. We observe that, above, the constant $K$ depends on $a$.

By the Gronwall Lemma we deduce, as before, that
\[\|w\|_{L^2(C)}^2(T) \leq \|w_0\|_{L^2(C)}^2 \exp{\left(\frac{\widetilde{K}}{2\pi}\int_0^T \|u\|_{L^4(C)}^4\,ds\right)}.\]

Finally, we use again the two-dimensional Ladyzhenskaya inequality for $u$, noticing that the derivatives which appear are with respect to $r$ and $z$ and, hence, their $L^2(D,rdrdz)$-norms are independent of $\theta$. This, together with the energy inequality \eqref{eninequ1}, yields the desired result, replacing $T$ by an arbitrary time $t \geq 0$.
This concludes the proof.

\end{proof}

Global existence and uniqueness of weak solutions for the axi-symmetric Navier-Stokes equations was established by O. Ladyzhenskaya, see \cite{Ladyzhenskaya1959}, but only under the assumption that the axi-symmetric fluid domain be bounded away from the symmetry axis, i.e., $r>a$, for some $a>0$. This restriction has the same origin as in Theorem \ref{Axisymmetric}, namely, loss of essential 2D scaling at the symmetry axis. (Additional results on global regularity of special solutions of the axi-symmetric Navier-Stokes equations, defined in a domain which includes the symmetry axis, have been obtained in \cite{houli}.)  We note that Theorem \ref{Axisymmetric}   leaves open the possibility that there might exist Leray-Hopf weak solutions of the (3D) Navier-Stokes equations with $L^2$ axi-symmetric initial velocity, for which the symmetry is spontaneously broken.

\begin{rem}
We have considered, throughout this section, viscous flows with zero forcing. It should be noted that, if the forcing term $f$ does not vanish and respects the same symmetry as the initial velocity, then the proofs of Theorems \ref{2andHalfD}, \ref{Helical} and \ref{Axisymmetric} can be easily adapted to show that
\[\|v-u\|_{L^2(C)}^2 (t)\leq \|v_0-u_0\|_{L^2(C)}^2 \exp\left\{M\left(\|u_0\|_{L^2(C)}^2 + 2\int_0^t \langle u (s,\cdot) , f(s,\cdot) \rangle \, ds \right)^2\right\},\]
for some $M=M(D,\nu)>0$. This implies, clearly, continuous dependence with respect to initial data and, in particular, uniqueness.
\end{rem}

\section{Inviscid flow}

In this section we discuss the possibility of spontaneous symmetry breaking for the Euler system.
Our first observation is that spontaneous symmetry breaking is possible for weak solutions of the
Euler system, in contrast with what we observed for the Navier-Stokes equations. This is a special case of a construction due to De Lellis and Sz\'ekelyhidi in \cite{DS10},
see Proposition 2. We will use this construction as formulated in Theorem 2 of \cite{Wiedemann}.  Before we begin we
need to introduce some terminology. Since, in this section, we deal only with flows in a periodic box we introduce the notation $Q^N = [0,1]^N$
for the periodic box in $\real^N$.

\begin{defn} \label{planar}
Let $f \in L^1(Q^3)$. We say that $f$ is essentially independent of $x_3$ (which is shortened to ei-$x_3$) if, for almost every $a,b \in (0,1)$, $f(x_1,x_2,a) = f(x_1,x_2,b)$, for
almost all $(x_1,x_2) \in Q^2$.
\end{defn}

With this, we are now ready to state precisely the symmetry breaking result.

\begin{theorem} \label{DSz} Let $u_0 = (u_0^1,u_0^2)\in C^{\infty}(Q^2)$ be divergence-free and periodic.
There exists a  weak solution (in fact infinitely many)  $u=u(t,x_1,x_2,x_3) \in C^0([0,\infty);H_w(Q^3))$ of the incompressible $3D$ Euler
equations such that $u(t=0) = (u_0,0)$, and $u$ is not ei-$x_3$.
\end{theorem}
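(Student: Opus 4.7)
My plan is to deduce the theorem from the convex integration construction of De Lellis and Sz\'ekelyhidi \cite{DS10}, in the form packaged by Wiedemann as Theorem 2 of \cite{Wiedemann}. That result asserts that for any divergence-free $v_0 \in L^2(Q^3)$, there exist infinitely many weak solutions of the $3D$ Euler equations in $C^0([0,\infty); H_w(Q^3))$ with $u|_{t=0} = v_0$, and the construction allows one to prescribe the energy density $|u(t,x)|^2 = \bar e$ a.e.\ for any $\bar e$ above a threshold determined by a chosen strict subsolution. Applied with $v_0 := (u_0, 0)$, this furnishes an infinite family $\mathcal{F}$ of weak solutions with the required initial data, and the theorem reduces to producing a member of $\mathcal{F}$ that is not ei-$x_3$.

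The first step is to record how rigid the ei-$x_3$ condition is at the level of weak solutions. If $u \in \mathcal{F}$ is ei-$x_3$ and one sets $w(t,x_1,x_2) := u(t,x_1,x_2,\cdot)$, then testing the weak Euler formulation against functions of the form $\phi(t,x_1,x_2)\chi(x_3)$ with $\int_0^1 \chi\,dx_3 = 1$ shows that $(w_1,w_2)$ is a weak solution of the $2D$ Euler equations on $Q^2$ with initial data $(u_0^1,u_0^2)$, while $w_3$ is a bounded weak solution of the passive transport $\partial_t w_3 + (w_1,w_2)\cdot\nabla_{(x_1,x_2)} w_3 = 0$ with $w_3|_{t=0}=0$. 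In particular, the energy density of such a solution is independent of $x_3$.

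The second step is to exploit the construction freedom by choosing a strict subsolution $(\bar v,\bar p,\bar R)$ in Wiedemann's scheme with $\bar v|_{t=0}=(u_0,0)$ but with $\bar v(t,\cdot)$ explicitly depending on $x_3$ for $t>0$; for example, take
\[
\bar v(t,x) = (u_0^1(x_1,x_2),u_0^2(x_1,x_2),0) + \rho(t)(\sin(2\pi x_3),0,0),
\]
which is divergence-free, with $\rho \in C^{\infty}([0,\infty))$, $\rho(0)=0$, $\rho(t)>0$ for $t>0$. The pressure $\bar p$ and symmetric matrix $\bar R$ are then chosen (in the standard way of \cite{DS10,Wiedemann}) so that the subsolution equation $\partial_t \bar v + \operatorname{div}(\bar v \otimes \bar v - \bar R) + \nabla \bar p = 0$ holds with $\tfrac{1}{2}|\bar v|^2 + \tfrac{1}{2}\operatorname{tr}\bar R < \bar e(t)$ strictly, for a prescribed smooth energy profile $\bar e$. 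Running the Baire-category argument of \cite{Wiedemann} within this class yields exact weak solutions $u$ with $u|_{t=0}=(u_0,0)$ whose coarse-scale behavior coincides with that of $\bar v$: the oscillatory perturbations that transform $\bar v$ into $u$ converge weakly (but not strongly) to zero, so $u$ inherits the non-trivial $x_3$-shear of $\bar v$, and since the ei-$x_3$ subspace of $L^2(Q^3)$ is weakly closed, one concludes $u \notin$ ei-$x_3$.

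The principal obstacle is certifying that the coarse $x_3$-structure of $\bar v$ is preserved by the weak limit of the Baire-category construction rather than being averaged away. This reduces to the observation that the projection $P$ onto the $L^2$-orthogonal complement of the ei-$x_3$ subspace is a continuous linear functional on $H_w(Q^3)$; hence $P(u) = P(\bar v) + \lim P(\text{perturbation}) = P(\bar v) \neq 0$, which forces $u$ to depend genuinely on $x_3$. Once this point is nailed down, the infinitude assertion follows from the already-infinitary character of \cite{Wiedemann}'s output.
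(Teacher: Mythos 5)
Your overall strategy (invoke Wiedemann's Theorem~2 packaging of the De Lellis--Sz\'ekelyhidi scheme with data $(u_0,0)$) is the right one and matches the paper, but the mechanism you use to detect $x_3$-dependence has a genuine gap. The decisive step is your claim that $P(u)=P(\bar v)+\lim P(\text{perturbation})=P(\bar v)\neq 0$, where $P$ is the projection onto the orthogonal complement of the ei-$x_3$ subspace. It is true that $P$, being a bounded linear operator, is weakly-weakly continuous, so if $v_k\rightharpoonup u$ along the Baire-category iteration then $P(v_k)\rightharpoonup P(u)$. But this only gives $P(u)=P(\bar v)+\lim_k\sum_{j<k}P(w_j)$, and the cumulative perturbation $\sum_j w_j$ does \emph{not} converge weakly to zero: if it did, uniqueness of weak limits would force $u=\bar v$, which is impossible since $u$ saturates the prescribed energy ($|u|^2/2=\bar e$ a.e.) while $\bar v$ is a strict subsolution. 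Each individual $w_j$ is weakly small because it is highly oscillatory, but convex integration works precisely because infinitely many weakly small corrections accumulate to a non-trivial strong effect; the construction gives you no control whatsoever over the linear functional $P(u)$, and it could perfectly well vanish even though $P(\bar v)\neq 0$. (Your remark that the ei-$x_3$ subspace is weakly closed cuts the wrong way: it would let you propagate membership in that subspace to the limit, not non-membership.) There is also a secondary issue with your choice of subsolution: adding the shear $\rho(t)(\sin(2\pi x_3),0,0)$ requires checking the compatibility of the prescribed energy with the initial data as $t\to 0^+$ in Wiedemann's hypotheses, which you do not address.

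The fix is essentially contained in your own first step, where you observe that an ei-$x_3$ weak solution has energy density independent of $x_3$. The one pointwise quantity the convex integration scheme \emph{does} let you prescribe exactly is $|u(t,x)|^2$. This is what the paper does: it takes the subsolution to be $\bar v=(v,0)$ with $v$ the smooth 2D Euler solution, $\bar u=\bar v\otimes\bar v-\tfrac{|v|^2}{3}\mathbb{I}d$, computes $e(\bar v,\bar u)=|v|^2/2$, and prescribes $\bar e(t,x)=|v|^2/2+g(t,x)$ with $g>0$ for $t>0$ and $g$ genuinely depending on $x_3$ (e.g.\ $g=\tfrac{t}{t^2+1}(1+\sin^2(2\pi x_3))$). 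Wiedemann's theorem then yields infinitely many weak solutions with $|u(t,x)|^2/2=|v|^2/2+g(t,x)$ a.e., and the $x_3$-dependence of the energy density immediately rules out ei-$x_3$, by the contrapositive of your own observation. I recommend you replace the subsolution-shear argument by this energy-prescription argument; the rest of your write-up then goes through.
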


\begin{proof}
Let $v = v(t,x_1,x_2)$ be the unique solution of the 2D Euler system in $Q^2$, given in \cite{EM70} with initial velocity $u_0$.
We use Theorem 2 of \cite{Wiedemann} with $\overline{v} = (v,0)$ and we define the trace-free matrix $\overline{u}$ by
$$\overline{u} = \overline{v} \otimes \overline{v} - \frac{|v|^2}{3}\mathbb{I}d. $$

Since $v$ is a solution of the Euler system, it follows that there exists a smooth, periodic (in space) pressure $\overline{q}$ such that the
triplet $(\overline{v},\overline{u},\overline{q})$ satisfies the linear system (1) from Theorem 2 in \cite{Wiedemann}. In addition, the conditions of Theorem 2 in \cite{Wiedemann}, that
$\overline{v} \in C^0([0,\infty);H_w(Q^3))$ and $\overline{u}(t,x)$ be a trace-free symmetric $3\times 3$ matrix, are also satisfied. Note that
\[e(\overline{v}(t,x),\overline{u}(t,x)) =  \frac{3}{2}\lambda_{max} (\overline{v}\otimes\overline{v} - \overline{u}) = \frac{|v|^2}{2},\]
where, for any symmetric matrix $M$, $\lambda_{max}(M)$ is the largest eigenvalue of $M$.
Next, take $g = g(t,x)$ to be a positive, $Q^3$-periodic and continuous function on $(0,\infty) \times \real^3$, belonging to $C_b^0([0,\infty);L^1(Q^3))$,
and define
\[\overline{e}(t,x) =  \frac{|v(t,x)|^2}{2} + g(t,x).\]
Then, using Theorem 2 in \cite{Wiedemann}, there exist infinitely many weak solutions $u \in C^0([0,\infty);H_w(Q^3))$
of the incompressible $3D$ Euler equations in $Q^3$ with initial data $(u_0,0)$, and such that for every $t \in (0,\infty)$ and almost
every $x \in Q^3$, $$\frac{|u(t,x)|^2}{2} =   \frac{|v(t,x)|^2}{2} + g(t,x).$$
We choose, for example
\[g(t,x) = \frac{t}{t^2+1} (1 + \sin^2(2\pi x_3)).\]
Clearly, $|u|^2/2$ is not constant with respect to any of the three spatial variables $x_1$, $x_2$ and $x_3$, which trivially implies that $u$  is not ei-$x_3$.
\end{proof}

\begin{rem}
Observe that the solution $u = u(t,x)$ satisfies $\|u(t,\cdot)\|_{L^2(Q^3)} > \|u(0,\cdot)\|_{L^2(Q^3)}$, for all $t>0$.
\end{rem}

This example is not the final word on this issue, since it is natural to restrict the search of weak solutions to a smaller
class, perhaps satisfying some physically motivated entropy-like criterion. Indeed, even in the viscous case, we ruled out spontaneous
symmetry breaking only for Leray-Hopf weak solutions, and not for weak solutions in general. In \cite{pllions}, P.-L. Lions introduced a
notion of generalized solution to the Euler equations which he called {\it dissipative solution}. He proved weak-strong uniqueness, in
this class, see Proposition 4.1 of \cite{pllions}, for flows satisfying certain regularity assumptions. The definition of dissipative solution, as given in \cite{pllions}, is complicated, but it was later noticed that weak solutions of the incompressible $3D$ Euler equations, which satisfy the weak energy inequality, are dissipative solutions in the sense of Lions, see \cite{DS10}, Proposition 1, for a proof of this fact. The weak-strong uniqueness of dissipative solutions, together with the observation regarding weak solutions which satisfy the weak energy inequality, imply that spontaneous symmetry breaking can be ruled out for these dissipative solutions. More precisely, we have the following result:

\begin{theorem} \label{lions}
Let $u_0 =(u_0^1,u_0^2) \in H(Q^2)$ be such that there exists a weak solution $\overline{u} \in C^0([0,\infty);H(Q^2))$ of the incompressible 2D Euler equations such that the symmetric part of $\nabla \overline{u}$ belongs to $L^1_{\loc}([0,\infty);L^{\infty}(Q^2))$.   Then any weak solution $u$ of the incompressible 3D Euler equations in $Q^3$,
with initial data $(u_0,0)$, which satisfies the weak energy inequality, i.e., such that, for all $t>0$,
\[\|u(t,\cdot)\|_{H(Q^3)} \leq \|(u_0,0)\|_{H(Q^3)} = \|u_0\|_{H(Q^2)},\]
is independent of $x_3$ (and is equal to $(\overline{u},0)$).
\end{theorem}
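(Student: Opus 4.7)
The plan is to reduce the theorem to an application of P.-L. Lions' weak-strong uniqueness for dissipative solutions of the 3D Euler equations, cited in the paper as Proposition 4.1 of \cite{pllions}. The natural candidate for the ``strong'' solution is the trivial 3D extension $\widetilde{u}(t,x_1,x_2,x_3) := (\overline{u}(t,x_1,x_2),0)$ of the given 2D weak solution $\overline{u}$. The natural candidate for the ``weak'' solution is the given $u$. If both can be fit into Lions' framework, weak-strong uniqueness will force $u \equiv \widetilde{u}$, which is automatically ei-$x_3$.

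First I would check that $\widetilde{u}$ is an admissible test solution for Lions' criterion. Since $\overline{u}$ is divergence-free and independent of $x_3$, so is $\widetilde{u}$, and the 2D Euler equation for $\overline{u}$ gives the 3D Euler equation for $\widetilde{u}$ with pressure $\overline{p}(t,x_1,x_2)$ (all $x_3$-derivatives vanish and the third component of the nonlinearity is zero). The symmetric part of $\nabla \widetilde{u}$ on $Q^3$ is, up to a block-diagonal extension by zeros, the symmetric part of $\nabla \overline{u}$ on $Q^2$; therefore the hypothesis that the symmetric part of $\nabla \overline{u}$ belongs to $L^1_{\loc}([0,\infty);L^\infty(Q^2))$ translates directly into the corresponding bound for $\widetilde{u}$ on $Q^3$, which is exactly the regularity condition required by Lions for his strong comparison solution.

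Next I would verify that $u$ qualifies as a dissipative solution in the sense of \cite{pllions}. This step uses the key reformulation noted in the paragraph preceding the theorem: by Proposition 1 of \cite{DS10}, any weak solution of the 3D Euler equations that satisfies the weak energy inequality is automatically a dissipative solution in Lions' sense. Our $u$ is weak by hypothesis and satisfies $\|u(t,\cdot)\|_{H(Q^3)} \le \|(u_0,0)\|_{H(Q^3)}$ for all $t>0$, so the hypothesis applies, giving us the dissipativity for free.

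With both inputs in place, Proposition 4.1 of \cite{pllions} applies with comparison solution $\widetilde{u}$ and dissipative solution $u$, both starting from the same initial datum $(u_0,0)$, and yields $u(t,\cdot) = \widetilde{u}(t,\cdot) = (\overline{u}(t,\cdot),0)$ for all $t \ge 0$. In particular $u$ is ei-$x_3$. The main subtle point I would watch is the first step: one must be sure that the extended field $\widetilde{u}$ really does qualify as a ``regular'' solution in the precise sense used in \cite{pllions} (and not merely as a 2D object); everything else is essentially a packaging argument invoking the already-established implications between weak-energy-inequality solutions and dissipative solutions.
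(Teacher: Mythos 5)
Your proposal follows essentially the same route as the paper's own outline: extend $\overline{u}$ trivially to $U=(\overline{u},0)$ on $Q^3$, use Proposition 1 of De Lellis--Sz\'ekelyhidi to upgrade the weak energy inequality to dissipativity in Lions' sense, and conclude by Lions' weak-strong uniqueness (Proposition 4.1 of \cite{pllions}). The only step the paper makes explicit that you leave implicit is that both cited propositions are stated for flows in $\real^N$ and must first be adapted to the periodic setting $Q^N$, an adaptation the paper itself describes as straightforward.
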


The proof of Theorem \ref{lions} is based on Proposition 4.1 of \cite{pllions} and on Proposition 1 of \cite{DS10}. We will not present a complete proof of Theorem \ref{lions} because this would exceed the scope of this work.  However, we will provide a brief outline of the proof in three steps.

\begin{enumerate}
\item The results contained in Proposition 4.1 of \cite{pllions} and in Proposition 1 of \cite{DS10} are stated and proved for flows in $\real^N$. The first step is to adapt these results to periodic flows in $Q^N$, which can be done in a straightforward manner.
\item Let $\overline{u}$ be the weak solution in the statement of Theorem \ref{lions}. Then $U\equiv (\overline{u},0)$ is a weak solution of the $3D$ Euler equations satisfying $U(0,\cdot)=(u_0,0)$, $U \in C^0([0,\infty);H(Q^3))$ and the symmetric part of $\nabla U$ belongs to  $L^1_{\loc}([0,\infty);L^{\infty}(Q^2))$. Hence, by (the adaptation of) Lions' Proposition 4.1, \cite{pllions}, any dissipative solution with the same initial velocity will be equal to $U$.
\item By (the adaptation of) Proposition 1, \cite{DS10}, any weak solution of the $3D$ Euler equations which satisfies the weak energy inequality will be a dissipative solution and, hence, equals $U$. Clearly, $U$ is independent of $x_3$.
\end{enumerate}

Note that the regularity requirement, which we wrote in terms of {\it existence} of a weak solution, is not very restrictive. Indeed, with initial vorticity in $L^{\infty}$, we already have existence and uniqueness
of a global weak solution in $C^0([0,\infty);H(Q^2))$ such that all first derivatives of velocity are in $L^{\infty}_{\loc}([0,\infty), BMO(Q^2))$, see \cite{Yudovich} and Theorem 7.1 in \cite{Torchinsky}. The condition in Lions' result is slightly more restrictive, and is certainly satisfied by strong solutions as in \cite{EM70}.

As we noted, any solution constructed using the strategy in Theorem \ref{DSz} will not satisfy the weak energy inequality, which places them out of the scope of Theorem
\ref{lions}. In \cite{DS10}, C. De Lellis and L. Sz\'ekelyhidi constructed examples of nonuniqueness of dissipative solutions of the Euler equations with $L^2$ initial velocities (this does not contradict the Yudovich criteria since vorticity of such initial data does not belong to $L^\infty$)  so uniqueness for dissipative solutions in general cannot hold. Up to now we have  no example of spontaneous symmetry breaking for dissipative solutions, but it would not be a surprise if the convex
integration techniques would allow the construction of such an example as well.

Ultimately, the most precise entropy criterion for weak solutions of the Euler equations is to be attained as a vanishing viscosity limit;  we can call
such solutions {\it viscosity solutions} of the Euler equations. (Observe that in the absence of physical boundaries, as in the present situation, any viscosity solution is a dissipative solution, see Proposition 4.2 of \cite{pllions}).

Our work already provides one result on retaining symmetry. If $u$ is a weak solution of the 3D Euler equations which is $Q^3$-periodic, with initial data $u_0$, and if $u_0$ is independent of $x_3$ then, as we have proved in the previous section, any Leray-Hopf weak solution of the Navier-Stokes equations with
initial data precisely equal to $u_0$ will be independent of $x_3$. It is easy to see that essentially any limit of $x_3$-independent flows will be  $x_3$-independent as well.  Restricting the notion of viscosity solutions
to those which are limits of Leray-Hopf weak solutions is quite reasonable, as those are the physically meaningful weak solutions of the Navier-Stokes equations. However, insisting
that viscosity solutions be limits of vanishing viscosity limits with exactly the same data might be too demanding.  In this sense, let us define a {\it viscosity weak solution} of the
Euler system with initial data $u_0$ as a solution which is a weak-star limit in $L^{\infty}((0,\infty);L^2(Q^3))$, as $\nu \to 0+$, of Leray-Hopf weak solutions of the $\nu$-
Navier-Stokes system in $Q^3$ with initial data $u_0^{\nu}$, where $u_0^{\nu} \to (u_0,0)$ strongly in $L^2$ when $\nu \to 0+$.  We will state and prove a result on retaining symmetry for viscosity solutions; we begin with a measure theory lemma.

\begin{lemma} \label{meas}
Let $f \in L^2(Q^3)$. Then $f$ is ei-$x_3$ if and only if $\partial_{x_3} f = 0$ in the sense of distributions.
\end{lemma}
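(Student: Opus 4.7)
The plan is to handle the two implications separately. The forward direction is essentially Fubini, and the converse is cleanest via Fourier series on the torus, exploiting that $Q^3$ is the periodic cube.

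For the easy direction, assume $f$ is ei-$x_3$. Combining the definition with Fubini, there is a null set $N \subset (0,1)$ such that for every $a_0 \notin N$ the slice $g(x_1,x_2) := f(x_1,x_2,a_0)$ lies in $L^2(Q^2)$, and for almost every $a \in (0,1)$ the slice $f(\cdot,\cdot,a)$ equals $g$ almost everywhere on $Q^2$. Fix one such $a_0$ and set $\tilde f(x_1,x_2,x_3) := g(x_1,x_2)$; a second use of Fubini shows $f = \tilde f$ almost everywhere in $Q^3$. Since $\tilde f$ does not depend on $x_3$, for any periodic test function $\phi$ one obtains
\[\langle \partial_{x_3} f, \phi \rangle = -\int_{Q^3} f \, \partial_{x_3}\phi \, dx = -\int_{Q^3} g(x_1,x_2) \, \partial_{x_3}\phi(x) \, dx = 0\]
by Fubini and periodicity in $x_3$.

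For the converse, assume $\partial_{x_3} f = 0$ in $\mathcal{D}'(Q^3)$. Expand $f$ in Fourier series on the torus,
\[f(x) = \sum_{k \in \mathbb{Z}^3} \hat f(k) \, e^{2\pi i k \cdot x}, \qquad \text{convergent in } L^2(Q^3).\]
Differentiation in $x_3$ corresponds to multiplication by $2\pi i k_3$, so the hypothesis forces $k_3 \hat f(k) = 0$ for every $k$; hence $\hat f(k) = 0$ whenever $k_3 \neq 0$. Therefore $f$ equals, in $L^2(Q^3)$, the sum $\sum_{(k_1,k_2) \in \mathbb{Z}^2} \hat f(k_1,k_2,0) \, e^{2\pi i (k_1 x_1 + k_2 x_2)}$, which is an $L^2(Q^2)$ function $h$ regarded as $x_3$-independent on $Q^3$. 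A final application of Fubini produces a full-measure set of $x_3 \in (0,1)$ for which $f(\cdot,\cdot,x_3) = h$ almost everywhere on $Q^2$; comparing two such slices gives that $f$ is ei-$x_3$.

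The entire argument is routine measure theory once the strategy is fixed; the only bit of care is in tracking the nested ``almost every'' quantifiers in the definition of ei-$x_3$, which Fubini handles uniformly in both directions. A clean alternative to Fourier series in the converse is to mollify the periodic extension of $f$ by a three-dimensional mollifier $\rho_\varepsilon$: each $f_\varepsilon := f * \rho_\varepsilon$ is smooth with $\partial_{x_3} f_\varepsilon = (\partial_{x_3} f) * \rho_\varepsilon = 0$, so $f_\varepsilon$ is classically independent of $x_3$, and extracting an a.e.-convergent subsequence $f_\varepsilon \to f$ yields the same conclusion.
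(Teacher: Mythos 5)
Your proof is correct and follows essentially the same route as the paper: the forward implication via Fubini and periodicity in $x_3$, and the converse via the Fourier expansion on the torus, observing that the distributional hypothesis kills all coefficients $\hat f(k)$ with $k_3 \neq 0$ and then using Fubini to compare slices. The only cosmetic difference is that the paper extracts a pointwise a.e.\ convergent subsequence of the partial sums rather than identifying $f$ with the $x_3$-independent limit directly in $L^2$, but this is the same argument.
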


\begin{proof}
The fact that $x_3$-independence  implies $\partial_{x_3} f = 0$ is an application of Fubini's Theorem. Indeed, let $\varphi \in C^{\infty}_{per}(Q^3)$. Fix $b \in (0,1)$ such that $f(x',a)=f(x',b)$ for almost every $a \in (0,1)$ and almost every $x'\in Q^2$. Let $A\subset (0,1)$ be defined by
\[A=\{a\in (0,1) \;|\; f(x',a)=f(x',b)\};\]
notice that $|A|=1$.
Then we have
\[\int_{Q^3} \partial_{x_3}\varphi (x) f(x) \,dx = \int_{(0,1)}\int_{Q^2} \partial_{x_3}\varphi (x',x_3) f(x',x_3) \,dx'dx_3 \]
\[ = \int_{A}\int_{Q^2} \partial_{x_3}\varphi (x',a) f(x',a) \,dx'da  = \int_A\int_{Q^2} \partial_{x_3}\varphi (x',a) f(x',b) \,dx'da \] \[=\int_{Q^2}f(x',b)\int_{A} \partial_{x_3}\varphi (x',a)  \, da dx' = \int_{Q^2}f(x',b)\int_{(0,1)} \partial_{x_3}\varphi (x',a)  \, da dx' = 0.\]

Conversely, assume that  $\partial_{x_3} f = 0$ in the sense of distributions. We write the Fourier series of $f$ as:
\[ f = \sum_{k \in \mathbb{Z}^3} \hat{f}(k) e^{2\pi ik\cdot x}.\]
Since $f \in L^2(Q^3)$, it follows that the truncations
\[f_N = f_N(x) \equiv \sum_{k \in \mathbb{Z}^3, |k|\leq N} \hat{f}(k) e^{2\pi ik\cdot x}\]
converges in $L^2$, and therefore, admits a subsequence, which we do not relabel, converging pointwise almost everywhere to $f$.

It can be verified that, for each fixed $N \in \mathbb{N}$,  $f_N$ is a function of $x_1$ and $x_2$ alone. Indeed, if $k=(k_1,k_2,k_3) \in \mathbb{Z}^3$ and $k_3 \neq 0$ then
\[\hat{f}(k)= \int_{Q^3} f(x) e^{-2\pi i k \cdot x} \, dx = \frac{-1}{2\pi i k_3} \int_{Q^3} f(x) \partial_{x_3} e^{-2\pi i k \cdot x} \, dx.\]
Hence, since trigonometric polynomials belong to $C^{\infty}_{per}(Q^3)$, it follows that, if $k_3 \neq 0$, then $\hat{f}(k) = 0$.
Consequently, $f_N$ is independent of $x_3$.

To conclude we note that, by Fubini's theorem, we have that, for almost all $a,b \in (0,1)$,  $f_N(x',a) \to f(x',a)$ and
$f_N(x',b) \to f(x',b)$ pointwise almost everywhere. It follows that $f(x',a)=f(x',b)$ for almost every $a$, $b \in (0,1)$, as desired.

\end{proof}

\begin{theorem} Let $u_0 \in H(Q^3)$ be ei-$x_3$ and let $u \in L^{\infty}((0,\infty);H(Q^3))$ be a weak solution of the 3D Euler equations in $Q^3$ with
initial data $u_0$. For each $\nu>0$, assume that there exists  $u^{\nu}$, a Leray-Hopf weak solutions of the Navier-Stokes equations in $Q^3$
with viscosity $\nu$ and with initial data $u_0^{\nu} \in H(Q^3)$, such that $u^{\nu} \rightharpoonup u$ in the sense of distributions in $(0,\infty) \times Q^3$, and
that there exists $C \geq \ds{\frac{27}{64}\|u_0\|_{L^2(Q^3)}^4}$, such that:
\begin{equation} \label{expcond}
 \|u_0 - u_0^{\nu}\|_{L^2(Q^3)} =  o\left( e^{-C/\nu^4} \right).
\end{equation}
Then $u$ is ei-$x_3$ for almost all time.
\end{theorem}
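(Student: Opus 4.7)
\medskip
\noindent\textbf{Proof proposal.} The plan is to compare $u^\nu$ to the fully $x_3$-independent viscous solution with the exact initial datum $u_0$, apply the stability estimate of Theorem \ref{2andHalfD} (adapted to $Q^3$) to show that the two sequences agree in the limit $\nu\to 0^+$, and then read off the symmetry of $u$ from the symmetry of each reference solution. By Lemma \ref{meas}, the ei-$x_3$ property of $u_0$ is equivalent to $\partial_{x_3}u_0=0$ in $\mathcal{D}'(Q^3)$, so $u_0$ may be identified with a divergence-free vector field on $Q^2$ (possibly with three nonzero components). For each $\nu>0$, let $U^\nu\in C^0([0,\infty);H_w(Q^3))\cap L^2_{\loc}([0,\infty);V(Q^3))$ denote the Leray--Hopf solution of the 3D Navier--Stokes system in $Q^3$ with viscosity $\nu$ and initial datum $u_0$, obtained by lifting to $Q^3$ the global $2\tfrac{1}{2}$D weak solution on $Q^2$; by construction $\partial_{x_3}U^\nu=0$ for every $\nu$.

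The proof of Theorem \ref{2andHalfD} should transcribe verbatim from the cylinder $D\times(0,L)$ to the periodic box $Q^3$, since its only domain-sensitive ingredient is the two-dimensional Ladyzhenskaya interpolation, available on $Q^2$. Applying it with $U^\nu$ as the symmetric reference and $u^\nu$ as the general Leray--Hopf perturbation yields, for every $t\geq 0$,
\[
\|u^\nu(t)-U^\nu(t)\|_{L^2(Q^3)}^2 \leq \|u_0^\nu-u_0\|_{L^2(Q^3)}^2 \exp\!\left(\frac{27}{64\nu^4}\|u_0\|_{L^2(Q^3)}^4\right).
\]
The assumption $C\geq \tfrac{27}{64}\|u_0\|_{L^2(Q^3)}^4$ bounds the exponential by $e^{C/\nu^4}$, while \eqref{expcond} gives $\|u_0^\nu-u_0\|_{L^2}^2 = o(e^{-2C/\nu^4})$. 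Multiplying, the right-hand side is $o(e^{-C/\nu^4})$, hence vanishes as $\nu\to 0^+$, uniformly in $t\in[0,\infty)$.

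Since $u^\nu\rightharpoonup u$ in $\mathcal{D}'((0,\infty)\times Q^3)$ by hypothesis and $u^\nu-U^\nu\to 0$ strongly in $L^\infty((0,\infty);L^2(Q^3))$, I obtain $U^\nu\rightharpoonup u$ in the same distributional sense. Passing $\partial_{x_3}U^\nu=0$ to that limit yields $\partial_{x_3}u=0$ in $\mathcal{D}'((0,\infty)\times Q^3)$. A standard Fubini argument (test against $\chi(t)\psi(x)$ with $\psi$ ranging over a countable dense subset of $C^\infty_{per}(Q^3)$, using the $L^\infty_t L^2_x$ bound on $u$) then gives $\partial_{x_3}u(t,\cdot)=0$ in $\mathcal{D}'(Q^3)$ for almost every $t$, and Lemma \ref{meas} concludes that $u(t,\cdot)$ is ei-$x_3$ for almost every $t$.

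The main obstacle I anticipate is confirming that Theorem \ref{2andHalfD} really does transport to $Q^3$ with exactly the constant $27/(64\nu^4)$: periodic divergence-free vector fields on $Q^2$ need not be mean-zero, so the pure interpolation $\|f\|_{L^4}\leq C\|f\|_{L^2}^{1/2}\|\nabla f\|_{L^2}^{1/2}$ fails without adjustment. One should either exploit that the mean of $w=u^\nu-U^\nu$ is a conserved quantity controlled by $\|u_0^\nu-u_0\|_{L^2}$, or absorb the resulting lower-order term into the Gronwall step at the cost of a harmless change in the constant; in either case the assumption \eqref{expcond}, with its super-exponential decay in $1/\nu^4$, easily compensates. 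All remaining ingredients (existence of the symmetric reference $U^\nu$, strong-to-distributional passage of $U^\nu\rightharpoonup u$) are standard.
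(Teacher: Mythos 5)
Your proposal is correct and follows essentially the same route as the paper: both compare $u^\nu$ to the symmetric Leray--Hopf solution with initial datum exactly $u_0$, invoke the stability estimate of Theorem \ref{2andHalfD} together with hypothesis \eqref{expcond} to kill the difference as $\nu\to 0^+$, and then pass $\partial_{x_3}=0$ to the distributional limit via Lemma \ref{meas}. Your side remark about the mean-zero issue in the Ladyzhenskaya inequality on the periodic box is a legitimate technical point that the paper does not address explicitly, but it does not alter the argument.
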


\begin{proof}
First consider $v^{\nu} = v^{\nu}(x,t)$ to be the Leray-Hopf weak solutions of the Navier-Stokes equations with viscosity $\nu > 0$ in $Q^3$ with initial data $u_0$. By Theorem \ref{2andHalfD},
$v^{\nu}$ is $x_3$-independent  for almost all time. Using the fact that $v^{\nu} \in C^0([0,\infty);H_w(Q^3))$ and Lemma \ref{meas} we can assume that $v^{\nu}$ is  $x_3$-independent for all time.
We write \[u = (u-u^{\nu}) + (u^{\nu} - v^{\nu}) + v^{\nu}.\] Using Theorem \ref{2andHalfD}, we have the following estimate:
\[\|v^{\nu} - u^{\nu}\|_{L^2(Q^3)} \leq \|u_0 - u_0^{\nu}\|_{L^2(Q^3)} \exp\left\{ \frac{27}{64\nu^4} \|u_0\|_{L^2(Q^3)}^4\right\}. \]

From the hypothesis, it follows that $u^{\nu} - v^{\nu} = o(1)$, as $\nu \to 0+$ in  $L^2(Q^3)$.

Let $\varphi = \varphi(t,x) \in C^{\infty}_c(Q^3)$ and choose $\eta \in C^{\infty}_{per}([0,\infty))$. We have:
\[\langle \eta\partial_{x_3}\varphi, u \rangle = \langle \eta\partial_{x_3}\varphi, u - u^{\nu} \rangle + \int_0^{\infty}\int_{Q^3} \eta\partial_{x_3}\varphi ( u^{\nu} - v^{\nu} )\,dxdt \]
\[+ \int_0^{\infty}\int_{Q^3} \eta\partial_{x_3}\varphi v^{\nu} \,dxdt.\]
Hence, using the fact that $v^{\nu}$ is $x_3$-independent we find, by Lemma \ref{meas},
\[\left|\langle \eta\partial_{x_3}\varphi, u \rangle \right| \leq  \left|\langle \eta\partial_{x_3}\varphi, u - u^{\nu} \rangle \right|+ \int_0^{\infty}|\eta|\|\partial_{x_3}\varphi \|_{L^2(Q^3)}\| u^{\nu} - v^{\nu}\|_{L^2(Q^3)}\,dt,
\]
which vanishes as $\nu \to 0^+$. This concludes the proof.

\end{proof}

\section{Comments and conclusions}

   One problem to be investigated is to try to extend the viscous stability result to flows in $\real^2 \times (0,L)$, periodic in the third variable. This could be attempted through the method developed in Section 2 or, perhaps, by adapting the work of Kozono and Taniuchi to this context. A more interesting, albeit difficult, class of problems is to consider perturbations which are not periodic, such as arise for compactly supported perturbations of Poiseuille flow in an infinite pipe. Another possible line of investigation is to  search for an example of inviscid symmetry breaking among dissipative solutions using convex integration techniques.

\section{Acknowledgements}
Bardos acknowledges the kind hospitality of the Weizmann Institute of Science, where part of this work was done. Lopes Filho's research is supported in part by CNPq grants 303089/2010-5 and  200434/2011-0. Nussenzveig Lopes' work is partially supported by CNPq grant 306331/2010-1 and CAPES grant 6649/10-6. Lopes Filho and Nussenzveig Lopes also acknowledge the support of the FAPESP Thematic Project 2007/51490-7, the CNPq Cooperation Project  490124/2009-7, and the hospitality of the Mathematics Department of the Univ. of California, Riverside. Niu's research was partially supported by National Youth grant, China (No. 11001184).
 Titi's work  was supported in part by the NSF grants DMS-1009950,
DMS-1109640 and DMS-1109645. Titi  also
acknowledges the support of the Alexander von Humboldt
Stiftung/Foundation and the Minerva Stiftung/Foundation.

\end{document}